\documentclass[creativecommons,sharealike]{eptcs}
\usepackage[T1]{fontenc}
\usepackage[utf8]{inputenc}


\usepackage{iftex}

\ifpdf
  \usepackage{underscore}         
  \usepackage[T1]{fontenc}        
\else
  \usepackage{breakurl}           
\fi

\DeclareUnicodeCharacter{03B1}{\texttt{$\alpha$}} 

\usepackage{enumitem}
\usepackage{amsmath,amssymb,amsthm}
\usepackage{tablefootnote}
\newtheorem{theorem}{Theorem}[section]
\newtheorem{definition}{Definition}[section]

\DeclareMathOperator{\rnd}{\rho}
\DeclareMathOperator{\op}{\mathit{op}}

\newcommand{\apb}{\mathbin{;}}
\DeclareMathOperator{\ap}{ap}
\DeclareMathOperator{\rp}{rp}
\DeclareMathOperator{\rel}{\mathit{rel}}
\DeclareMathOperator{\abs}{\mathit{abs}}
\DeclareMathOperator{\F}{\mathbb{F}}
\DeclareMathOperator{\R}{\mathbb{R}}
\DeclareMathOperator{\err}{\textnormal{err}}

\newtheorem{corollary}{Corollary}[theorem]

\newcommand{\coqref}[1]{(\href{#1}{$\lozenge$})}
\newcommand{\coqurl}[1]{\href{#1}{$\lozenge$}}
\newcommand{\coqfile}[1]{(\href{\repo/\slug/#1}{$\lozenge$})}
\newcommand{\coqfilen}[1]{\href{\repo/\slug/#1}{$\lozenge$}}

\newcommand{\coqtag}[1]{
  \refstepcounter{equation}\tag{\theequation, \href{#1}{$\lozenge$}}%
}
\newcommand{\coqtagf}[1]{
  \refstepcounter{equation}\tag{\theequation, \href{\repo/\slug/#1}{$\lozenge$}}%
}

\makeatletter
\newcommand{\owntag}[2][\relax]{
  \ifx#1\relax\relax\def\owntag@name{#2}\else\def\owntag@name{#1}\fi
  \refstepcounter{equation}\tag{\theequation, #2}%
  \expandafter\ltx@label\expandafter{eq:\owntag@name}%
  \edef\@currentlabel{\theequation, #2}\expandafter\ltx@label\expandafter{Eq:\owntag@name}%
  \def\@currentlabel{#2}\expandafter\ltx@label\expandafter{tag:\owntag@name}%
}
\makeatother

\usepackage{newtxmath} 

\usepackage{cleveref}

\title{Mechanizing Olver's Error Arithmetic}
\author{
Max Fan
\institute{Cornell University \\
Ithaca, New York, USA}
\email{mxf@cs.cornell.edu}
\and
Ariel E. Kellison
\institute{Sandia National Laboratories\\
Livermore, CA, USA}
\email{aekelli@sandia.gov}
\and
Samuel D. Pollard
\institute{Sandia National Laboratories\\
Livermore, CA, USA}
\email{spolla@sandia.gov}
}

\begin{document}
\maketitle

\begin{abstract}
We mechanize the fundamental properties of a rounding error model for floating-point arithmetic based on \emph{relative precision}, a measure of error proposed as a substitute for relative error in rounding error analysis. A key property of relative precision is that it forms a true metric, providing a well-defined measure of distance between exact results and their floating-point approximations while offering a structured approach to propagating error bounds through sequences of computations. Our mechanization formalizes this property, establishes rules for converting between relative precision and relative error, and shows that the rounding error model based on relative precision slightly overapproximates the standard rounding error model. Finally, we demonstrate, with a simple example of the inner product of two vectors, that this alternative model facilitates a tractable approach to developing certified rounding error bounds.
\end{abstract}

\section{Introduction}
Many scientific and engineering domains rely on algorithms designed to solve problems in continuous mathematics. While these algorithms are typically specified using real numbers, their implementations use floating-point arithmetic. As a result, these implementations deviate from their real-number specifications due to rounding and approximation errors. These deviations make it challenging to ensure the correctness of programs that rely on floating-point arithmetic, motivating the development of numerous tools for analyzing and verifying floating-point computations~\cite{appel24,solovyev18,Darulova:2018:Daisy,colibri21,dedinechin:2011:gappa}. In high-consequence settings, practitioners often turn to formal verification techniques, including proof assistants~\cite{pollard:2023:twofv}, to certify rounding error estimates and ensure rigorous guarantees of correctness.

A fundamental decision in both manual rounding error analyses and formal verification is the choice of a \emph{rounding error model}: how does the floating-point result of a basic arithmetic operation relate to its exact real-number counterpart? This decision is crucial because it directly impacts the \emph{soundness}, \emph{precision}, and \emph{tractability} of the analysis. A model that underapproximates floating-point behavior may lead to \emph{unsound guarantees}, while an overly conservative model may introduce \emph{pessimistic} error bounds that limit its practical usefulness. More precise models improve accuracy but often increase the complexity of verification, making proofs harder to automate. Choosing the right model is therefore a key trade-off in certified error estimation.  

In this work, we provide a mechanized formalization of an alternative rounding error model based on \emph{relative precision}, first proposed by Olver~\cite{olver78} as a substitute for relative error in rounding error analysis. Unlike relative error, relative precision defines a true metric, providing a well-defined measure of distance between floating-point approximations and exact results. By construction, relative precision naturally supports an alternative rounding error model that succinctly and soundly captures higher-order error terms. We mechanize the fundamental properties of this model in the Rocq\footnote{Rocq was renamed from Coq in 2025.} proof assistant~\cite{coq820} and establish its relationship to the standard rounding error model for elementary floating-point operations formalized in Flocq~\cite{boldo11}.

Mechanizing previously established pen-and-paper results strengthens the foundations for automated and mechanized sound analyses, leading to more reliable tools for reasoning about floating-point error. Formalization in a proof assistant not only increases confidence in correctness but also enhances extensibility, enabling future extensions and integrations with automated verification frameworks. For example, by mechanizing Olver's work, we discovered that two fundamental properties of relative precision from the original paper did not adequately handle signs and were thus ill-defined over negative values.

More concretely, this work contributes:

\begin{itemize}
    \item A detailed account of Olver's original error arithmetic (\Cref{sec:formal}), with the results proved in Rocq.
    \item Corrections to make two ill-defined properties of relative precision in Olver's original error arithmetic well-defined and true (\Cref{sec:rp}).
    \item A demonstration of the simplicity of the proposed error analysis using an inner product example in Rocq (\Cref{sec:example}).
\end{itemize}

We release our code at \url{\repo} under an open-source license and provide hyperlinks to the Rocq code using the $\lozenge$ symbol.
  
\section{Overview}
A rounding error analysis aims to provide an \emph{a priori} estimate of how rounding errors impact an algorithm. The typical approach models the rounding error from a single floating-point operation using a sound overapproximation of the true error, and then combines the error bounds from each individual operation to approximate the total error accumulated over a sequence of operations. We provide the necessary definitions below, but we refer the interested reader to~\cite[Ch.~2]{Muller18} for a more complete discussion on floating point, and rounding, overflow, underflow, and exceptional values such as NaN and $\infty$.

\paragraph{The Standard Rounding Error Model}
A widely used model for rounding error analysis is the \emph{standard model} of floating-point arithmetic \cite{higham02}, which describes how floating-point approximations relate to exact computations under the assumption that overflow, underflow, and exceptional values do not occur. The central theorem of this model, for a given rounding function $\rnd : \mathbb{R} \rightarrow \mathbb{F}$, is as follows, where $\mathbb{R}$ and $\mathbb{F}$ are the sets of all real and floating-point numbers, respectively.
\begin{theorem}\label{thm:main_round}
  Given a real number $x \in \R$, and assuming no underflow or overflow occurs, 
  then there exists a $\delta \in \mathbb{R}$ such that \cite[Theorem 2.2]{higham02}: 
\begin{equation}\label{eq:main_round}
    \rho(x) = x (1+\delta) \qquad |\delta| \le u,
\end{equation}
\end{theorem}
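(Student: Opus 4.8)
The plan is to produce $\delta$ explicitly and then reduce the theorem to a single inequality on the relative rounding error. First I would dispose of the degenerate case $x = 0$: any reasonable rounding satisfies $\rho(0) = 0$, so the choice $\delta = 0$ gives $\rho(0) = 0 \cdot (1+\delta)$ with $|\delta| = 0 \le u$. For $x \neq 0$ I would define $\delta := (\rho(x) - x)/x$, so that $\rho(x) = x(1+\delta)$ holds by construction. All the mathematical content then collapses onto the bound $|\rho(x) - x|/|x| \le u$.

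The substantive step is thus bounding the relative rounding error, and the argument is geometric. In a system with base $\beta$ and precision $p$, consecutive floating-point numbers sharing exponent $e$ are spaced $\beta^{e-p+1}$ apart, and any normal $x$ lies in an interval $[\beta^e,\beta^{e+1})$ on which this spacing is constant. Rounding to nearest (as in Higham's formulation) maps $x$ to the closer of its two enclosing grid points, so $|\rho(x) - x| \le \tfrac{1}{2}\beta^{e-p+1}$; dividing by $|x| \ge \beta^e$ yields $|\delta| \le \tfrac{1}{2}\beta^{1-p} = u$, exactly the unit roundoff. The hypothesis ``no underflow or overflow'' is precisely what guarantees that $x$ sits in such a normal interval and that $\rho(x)$ is finite, so that the constant-spacing estimate applies without boundary corrections.

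In the mechanized setting the real obstacle is not this algebra but faithfully aligning the informal spacing argument with the abstractions Flocq provides. Flocq models rounding through a generic format together with a rounding operator, and it already packages the relative-error estimate as a reusable lemma of the form $|\rho(x) - x| \le u\,|x|$ under a suitable normality hypothesis. I would therefore instantiate Flocq's format at the intended radix and precision, discharge the side conditions that encode ``no underflow'' (typically a lower bound on the exponent of $x$, or equivalently on $|x|$ relative to the smallest normal magnitude), and then invoke the library lemma to obtain the relative bound. The delicate bookkeeping is ensuring that the $u$ used in the statement coincides with Flocq's unit roundoff, and that the existentially quantified $\delta$ is packaged in a form the later development can consume directly — in particular so that the conversions between relative error and relative precision can be applied to it without re-deriving the bound.
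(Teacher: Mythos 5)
Your proposal is mathematically correct, but note that the paper never proves this theorem at all: it is imported as background, with a citation to Higham [Theorem 2.2], and on the mechanization side the paper's standard-model facts come from Flocq's pre-existing relative-error lemmas rather than from a re-derivation. Your spacing argument --- reduce to $|\rho(x)-x| \le \tfrac{1}{2}\beta^{e-p+1}$ on the binade $[\beta^e, \beta^{e+1})$ and divide by $|x| \ge \beta^e$ --- is exactly the classical proof underlying that citation, so there is no conflict in substance; you are supplying the proof the paper delegates to the literature. Two points of alignment are worth flagging. First, as stated in the paper the theorem is meant to cover all the rounding functions of \Cref{tab:rnd_modes}, with $u$ depending on $\rho$; your half-spacing bound is specific to round-to-nearest, and for the directed roundings $\rho_{RU}$, $\rho_{RD}$, $\rho_{RZ}$ you need the full-spacing estimate $|\rho(x)-x| \le \beta^{e-p+1}$, giving $|\delta| \le \beta^{1-p} = 2^{1-p}$, which is precisely the larger unit roundoff in that table. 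Second, the paper's mechanization does not encode ``no underflow'' as a side condition on the exponent of $x$: it works in Flocq's FLX format, whose unbounded exponent range makes that hypothesis structural, so the relative-error bound holds unconditionally and $\rho(0)=0$ is immediate. Your plan of discharging a normality side condition corresponds instead to Flocq's FLT format; it is workable, but adopting FLX as the paper does is what lets the downstream results (e.g., the conversion to relative precision and \Cref{thm:rp_ops}) consume the bound without case analysis near the underflow threshold.
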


\begin{table}
\centering
\caption{The behavior of common rounding functions and their corresponding unit roundoff values ($u$) for binary formats. $\mathbb{F}$ is the set of all floating-point numbers for a given format with precision $p$.}
\label{tab:rnd_modes}
\begin{tabular}{ l l l l }
 \hline
\textbf{Rounding Function ($\rho$)} & \textbf{Behavior} & \textbf{Notation} & \textbf{Unit Roundoff ($u$)} \\
\hline
 Round towards $+\infty$   & $\min\{b \in \F \mid b \ge a \}$  & $\rho_{RU}(a)$ & $2^{1-p}$   \\
 Round towards $-\infty$   & $\max\{b \in \F \mid b \le a \}$  &   $\rho_{RD}(a)$ & $2^{1-p}$ \\
 Round towards $0$   &   $\rho_{RU}(a)$ if $a < 0 $, otherwise  $\rho_{RD}(a)$  & $\rho_{RZ}(a)$
        & $2^{1-p} $ \\
 Round towards nearest & $\{b \in \F \mid \forall c
	\in \F,  |a - b| \le |a-c| \}
         $   & $\rho_{RN}(a)$ & $2^{-p}  $    \\
 \hline
\end{tabular}
\end{table}

The constant $u$ in \Cref{thm:main_round} is the unit roundoff, which depends on the floating-point format (e.g., IEEE 754 single precision or double precision) and the rounding function $\rnd : \mathbb{R} \rightarrow \mathbb{F}$; the behavior of common rounding functions and their corresponding unit roundoff values for binary formats are given in \Cref{tab:rnd_modes}. For IEEE 754 single and double precision with round-to-nearest (with arbitrary tie breaking), the unit roundoff values are $u = 2^{-24}$ and $u = 2^{-53}$, respectively~\cite{IEEE754:19}. 

The standard model for floating-point arithmetic follows from \Cref{thm:main_round}: for floating-point numbers $x$ and $y$ in a format $\mathbb{F} \subset \mathbb{R}$ (again, ignoring exceptional values like NaN or $\infty$), for real numbers $a$ and $b$, the standard model expresses the floating-point approximation $\rnd(a ~\op~ b) \in \mathbb{F}$ of $(a \;\op\; b) \in \mathbb{R}$ as
\begin{equation}\label{eq:stdmodel}
    \rnd(a \;\op\; b) = (a \;\op\; b) (1 + \delta), \qquad |\delta| \le u, \qquad \op \in \{+,-,\times,\div\}.
\end{equation}

\paragraph{Absolute and Relative Error}
The standard rounding error model given in \Cref{eq:stdmodel} provides a structured way to approximate floating-point errors. Within this framework, two fundamental measures of error are widely used: absolute error and relative error. As functions, relative error (err$_{\rel}$) and absolute error (err$_{\abs}$) are defined over the real numbers $\mathbb{R}$, where the set of representable floating-point numbers in a specified format $\mathbb{F}$ form a discrete subset:  

\begin{align}
    \err_{\rel} (x,y) \triangleq \left| \frac{x-y}{x} \right|
        \label{eq:rel_error}\\
    \err_{\abs} (x,y) \triangleq  |x-y| \label{eq:abs_error}
\end{align}

Absolute error and relative error each come with distinct trade-offs. Absolute error disproportionately weights discrepancies between larger values more than smaller ones. Relative error, on the other hand, behaves poorly on small values, where it can become unbounded, and also fails to satisfy the axioms of a metric on $\mathbb{R}$. Specifically, relative error does not satisfy the property of symmetry, as $\err_{\rel} (x,y) \neq \err_{\rel} (y,x)$, and also violates the triangle inequality, making it unsuitable for structured reasoning about the propagation of rounding errors. A measure of error that forms a true metric provides a natural way to reason about the \emph{composition} of errors, ensuring a tractable method for propagating error bounds through sequences of computations.  

\paragraph{Metrics and Rounding Error Bounds}
For instance, consider two real numbers $a, b \in \mathbb{R}$. The exact sum of $a$ and $b$ is given by  
\[
s = a + b,
\]  
while the floating-point computation, using \Cref{eq:stdmodel}, produces  
\[
\tilde{s} = (a + b)(1+\delta) = \tilde{a} + \tilde{b}; \quad |\delta| \le u,
\] 
where $\tilde{a}=a(1+\delta)$ and $\tilde{b}=b(1+\delta)$. If $d_\mathbb{R}: \mathbb{R} \times \mathbb{R} \to \mathbb{R}^{\geq 0}$ is a true metric that satisfies the triangle inequality, then the total rounding error can be bounded as follows:  
\begin{equation}\label{eq:ex1}
d_\mathbb{R}(s, \tilde{s}) \leq d_\mathbb{R}(a, \tilde{a}) + d_\mathbb{R}(b, \tilde{b}).
\end{equation}  
Here, the first term $d_\mathbb{R}(a, \tilde{a})$ measures the error in approximating $a$ by $\tilde{a}$, and similarly, $d_\mathbb{R}(b, \tilde{b})$ quantifies the error in approximating $b$ by $\tilde{b}$. The triangle inequality ensures that these local errors can be systematically combined to bound the total error in the sum.  

This property allows for a structured approach to error analysis: if individual rounding errors can be bounded, then their composition can be rigorously bounded using the metric structure. More generally, the same principle applies to sequences of floating-point computations beyond addition, enabling a tractable method for bounding accumulated errors in numerical algorithms.

\paragraph{Relative Precision} A metric that closely approximates relative error, known as \emph{relative precision}, was first introduced by Olver~\cite{olver78}. To simplify rounding error analysis, Olver proposed the notation 
\begin{equation}\label{eq:rp1}
    a \sim \tilde{a}; \quad \text{rp}(\alpha)
\end{equation}
to indicate that 
\begin{equation}\label{eq:rp2}
   \tilde{a} / a = e^\delta, \ \ \text{ where } |\delta| \le \alpha,
\end{equation}
and referred to $\tilde{a}$ as an approximation of $a$ with \emph{relative precision} $\alpha$. Notably, \Cref{eq:rp2} only holds whenever $a$ and $\tilde{a}$ are both nonzero and have the same sign. This notation naturally leads to the following definition of a \emph{relative precision metric} on nonzero numbers of the same sign: 

\begin{equation}
    \err_{\rp}(\tilde{x},x) \triangleq |\ln(\tilde{x}/x)|  \ \ \text{ where }\ x,\tilde{x} \neq 0 \ \text{ and } \ \tilde{x}/x > 0. 
\end{equation}

We next describe our Rocq formalization of the relative precision metric, a rounding error model based on \Cref{eq:rp2}, and the key properties of both the metric and the model. 

\section{Formalization}\label{sec:formal}

This section outlines the key theorems from our Rocq formalization of an alternative rounding error model for floating-point arithmetic based on the relative precision metric.

\subsection{An Alternative Model for Floating-Point Arithmetic}\label{ssec:alt_model}
In the spirit of \Cref{eq:rp2}, our mechanization establishes the following alternative to \Cref{thm:main_round}. 
\begin{theorem}\label{thm:rp_model}
Given a real number $x \in \R$, and a rounding function $\rho \in \{\rho_{RU},\rho_{RD},\rho_{RZ},\rho_{RN}\}$ and assuming no underflow or overflow occurs, then there exists a $\delta \in \mathbb{R}$ such that
\begin{equation} 
   \rho(x) = x e^\delta, \ \ \text{ where } |\delta| \le \frac{u}{1-u},
    \coqtagf{error_model.v\#Ll67}
\end{equation}
where $0 < u \ll 1$ is the unit roundoff for $\rho$.
\end{theorem}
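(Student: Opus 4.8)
The plan is to reduce \Cref{thm:rp_model} to the standard model of \Cref{thm:main_round} by a change of variable from the multiplicative perturbation $1+\delta$ to the exponential perturbation $e^\delta$. First I would dispatch the degenerate case $x = 0$: since every rounding function fixes $0$, we have $\rho(0) = 0 = 0 \cdot e^{0}$, so $\delta = 0$ works. For $x \neq 0$, apply \Cref{thm:main_round} to obtain some $\delta'$ with $\rho(x) = x(1+\delta')$ and $|\delta'| \le u$. Because $0 < u \ll 1$ (in particular $u < 1$), we have $1 + \delta' \ge 1 - u > 0$, so $\ln(1+\delta')$ is well-defined; setting $\delta := \ln(1+\delta')$ immediately gives $\rho(x) = x(1+\delta') = x\,e^{\delta}$, which establishes the equation. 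It then remains only to bound $|\delta|$.

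For the bound, monotonicity of $\ln$ on $(0,\infty)$ together with $1+\delta' \in [1-u,\,1+u]$ gives $\ln(1-u) \le \delta \le \ln(1+u)$, so that $|\delta| \le \max\{\,-\ln(1-u),\ \ln(1+u)\,\}$. I would then prove the two one-sided estimates separately: $\ln(1+u) \le u \le \frac{u}{1-u}$ (the first from the standard bound $\ln(1+t) \le t$, the second because $0 < 1-u < 1$), and $-\ln(1-u) \le \frac{u}{1-u}$, which is the binding constraint. Combining the two yields $|\delta| \le \frac{u}{1-u}$.

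The crux is the inequality $-\ln(1-u) \le \frac{u}{1-u}$ for $u \in (0,1)$. One clean way to see it is via the power series $-\ln(1-u) = \sum_{k\ge 1} u^{k}/k$ and $\frac{u}{1-u} = \sum_{k\ge 1} u^{k}$: since $1/k \le 1$ and every term is nonnegative, the inequality holds term by term. Alternatively, after the substitution $s = 1/(1-u)$ it reduces to the familiar fact $\ln s \le s - 1$, which is likely already available in the ambient real-analysis library. In the Rocq mechanization I expect this transcendental estimate, rather than the purely algebraic rearrangements, to be the main obstacle, since it requires invoking lemmas about $\ln$ (monotonicity together with a concavity or series bound); the remaining steps are routine once this inequality and $\ln(1+t) \le t$ are in hand.
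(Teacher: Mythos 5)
Your proof is correct and takes essentially the same route as the paper's (mechanized) proof: reduce to the standard model of \Cref{thm:main_round}, set $\delta = \ln(1+\delta')$, and close with the logarithm estimates $\ln(1+u) \le u \le \frac{u}{1-u}$ and $-\ln(1-u) \le \frac{u}{1-u}$, the latter being the binding case that forces the $\frac{u}{1-u}$ bound. The paper presents no prose proof (it lives in \texttt{error\_model.v}), but its surrounding discussion relating \Cref{thm:rp_model} to \Cref{thm:main_round} via $e^\delta = 1+\delta+O(\delta^2)$ confirms this is the intended derivation, and your handling of $x=0$ and of the well-definedness of $\ln(1+\delta')$ is sound.
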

Notably, the rounding error bound in \Cref{thm:rp_model} is a slight overapproximation to the bound in \Cref{thm:main_round}. This becomes clearer when considering the Taylor series expansion of $e^\delta$:
\begin{equation} 
   \rho(x) = x e^\delta = x (1 + \delta + O(\delta^2))
\end{equation}
Additionally, note that $u \le u/(1-u)$. Furthermore, while Olver's original notation in \Cref{eq:rp2} applies only when $a$ and $\tilde{a}$ are both nonzero and have the same sign, \Cref{thm:rp_model} remains valid for any real number $x \in \R$, provided no underflow or overflow occur.   

From \Cref{thm:rp_model}, we can derive an alternative model for floating-point arithmetic: for floating-point numbers $x, y \in \mathbb{F}$, the alternative model expressed the floating-point approximation $\rho(x ~\op~ y) \in \F$ of $(x ~\op~ y) \in \R$ as 
\begin{equation}
    \rho(x ~\op~ y)  = (x ~\op~ y)e^\delta, \quad |\delta| \le \frac{u}{1-u}, \quad \op \in \{+,-,\times,\div\}.
\end{equation}

The key advantage of this model is that it represents rounding errors in a way that naturally aligns with the relative precision metric, enabling a true metric-based characterization of the accuracy of a floating-point operation relative to its exact result. In the following section, we outline the key definitions and properties of relative precision, as first introduced by Olver~\cite{olver78}, which we have formalized in Rocq. We additionally discuss two properties that we discovered to be ill-defined in the course of mechanization, and propose fixes to make the properties well-defined. In \Cref{sec:example}, we demonstrate how relative precision provides a convenient and mathematically sound framework for analyzing floating-point computations. 

\subsection{Relative Precision}\label{sec:rp}
\begin{definition}[Relative Precision \coqfile{relative_prec.v\#L8}] \label{def:rp}
Let $a, \tilde{a}, \alpha \in \mathbb{R}$. We say that $\tilde{a}$ is an approximation to $a$ of 
\emph{relative precision} $\alpha$ \emph{iff}  $\alpha \ge 0$, $a$ and $\tilde{a}$ are nonzero and have the 
same sign, and
\[
    \err_{\rp}(\tilde{a},a) = |\ln(\tilde{a}/a)| \le \alpha.
\] 
\end{definition}

\noindent Recall from \Cref{eq:rp1} $a \sim \tilde{a} \;\apb\; \rp(\alpha)$ denotes that $\tilde{a}$ is an approximation to $a$ of \emph{relative precision} $\alpha$. To convert from relative precision to relative error, we prove the following theorem:

\begin{theorem}\label{thm:rp_re_conversion}
Let $a$, $\tilde{a}$, $\alpha \in \mathbb{R}$ such that $a \sim \tilde{a} \;\apb\; \rp(\alpha)$. Then,
\begin{equation}
\err_{\rel} (a, \tilde{a}) \le |e^\alpha - 1|.
\coqtagf{relative_error.v\#L21}
\end{equation}
\end{theorem}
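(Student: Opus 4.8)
The plan is to reduce everything to a single positive ratio and then exploit the two-sided bound that relative precision supplies. Writing $r \triangleq \tilde{a}/a$, the nonzero, same-sign hypothesis guarantees $r > 0$, so both $\ln r$ and the relative error are well-defined. I would first rewrite the target directly in terms of $r$:
\[
\err_{\rel}(a,\tilde{a}) = \left| \frac{a - \tilde{a}}{a} \right| = |1 - r|.
\]
The goal thus becomes $|1 - r| \le |e^\alpha - 1|$, and since the hypothesis $a \sim \tilde{a} \apb \rp(\alpha)$ includes $\alpha \ge 0$, we have $e^\alpha \ge 1$, so the right-hand side is simply $e^\alpha - 1$.

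Next I would unpack the hypothesis, which by \Cref{def:rp} states $|\ln r| \le \alpha$. Applying $\exp$ and using its monotonicity together with $r > 0$, this is equivalent to the two-sided bound
\[
e^{-\alpha} \le r \le e^{\alpha}.
\]
From here I would split on whether $r \ge 1$ or $r < 1$. In the first case $|1 - r| = r - 1 \le e^\alpha - 1$ follows immediately from the upper bound. In the second case $|1 - r| = 1 - r \le 1 - e^{-\alpha}$ follows from the lower bound, so it remains only to compare $1 - e^{-\alpha}$ against the target value $e^\alpha - 1$.

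The one genuinely non-routine step is this last comparison: I must show $1 - e^{-\alpha} \le e^\alpha - 1$ for $\alpha \ge 0$. Rearranging, this is exactly $e^\alpha + e^{-\alpha} \ge 2$, i.e.\ $2\cosh\alpha \ge 2$, which is the arithmetic–geometric mean inequality applied to the reciprocal pair $e^\alpha$ and $e^{-\alpha}$. With that inequality in hand, both cases yield $|1 - r| \le e^\alpha - 1 = |e^\alpha - 1|$, completing the argument. In the mechanization I expect this $\cosh$-style bound, rather than the case analysis itself, to be the step requiring the most care, since it is the only place a nontrivial analytic fact about the exponential is invoked; everything else is algebraic manipulation of $r$ plus monotonicity of $\exp$ and $\ln$.
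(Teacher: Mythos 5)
Your proof is correct and takes the natural route: rewriting the relative error as $|1-r|$ with $r=\tilde{a}/a>0$, exponentiating the hypothesis $|\ln r|\le\alpha$ to get $e^{-\alpha}\le r\le e^{\alpha}$, splitting on $r\ge 1$ versus $r<1$, and closing the second case with $e^{\alpha}+e^{-\alpha}\ge 2$. The paper supplies no prose proof for this theorem (only the Rocq mechanization), but your argument is exactly the standard one that such a mechanization would follow, and every step checks out, including the observation that $\alpha\ge 0$ (from the definition of relative precision) makes $|e^{\alpha}-1|=e^{\alpha}-1$.
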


Given the above definition of relative precision and \Cref{thm:rp_model}, we formalized the following theorem in Rocq, which characterizes the relative error of basic floating-point operations in terms of relative precision.

\begin{theorem}\label{thm:rp_ops}
Let $a,b \in \mathbb{F}$ be two floating-point numbers such that $a + b \ne 0$. 
Assuming overflow and underflow do not occur, then
\begin{equation}
\rho(a \op b) \;\apb\; \rp \left(\frac{u}{1-u}\right), \quad \op \in \{+,-,\times,\div\}
\coqtagf{relative_prec_ops.v\#L42}
\end{equation}
for some $0 < u \ll 1$.
\end{theorem}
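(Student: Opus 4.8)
The plan is to read the claim through \Cref{def:rp}: writing $\rho(a \op b) \apb \rp\bigl(\tfrac{u}{1-u}\bigr)$ to mean that $\rho(a \op b)$ approximates the exact value $a \op b$ with relative precision $\tfrac{u}{1-u}$, the statement unfolds into three obligations---that $\tfrac{u}{1-u} \ge 0$, that $a \op b$ and $\rho(a \op b)$ are nonzero and share a sign, and that $\err_{\rp}(\rho(a \op b), a \op b) = |\ln(\rho(a \op b)/(a \op b))| \le \tfrac{u}{1-u}$. The single key input is \Cref{thm:rp_model}, applied to the real number $x := a \op b$, which is legitimate because $a, b \in \F \subset \R$ and the hypothesis already forbids overflow and underflow. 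This yields a $\delta \in \R$ with $\rho(a \op b) = (a \op b)\, e^\delta$ and $|\delta| \le \tfrac{u}{1-u}$.

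From this one equation every obligation falls out mechanically. First I would settle non-negativity of $\tfrac{u}{1-u}$ from $0 < u \ll 1$, which forces $1 - u > 0$ and hence a positive quotient. Next, because $e^\delta > 0$ for every real $\delta$, the product $(a \op b)\, e^\delta$ vanishes exactly when $a \op b$ does and otherwise inherits its sign; so, provided $a \op b \ne 0$, both $a \op b$ and $\rho(a \op b)$ are nonzero and of the same sign. Finally, substituting the equation into the metric gives $\err_{\rp}(\rho(a \op b), a \op b) = |\ln(e^\delta)| = |\delta| \le \tfrac{u}{1-u}$, closing the last obligation.

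The main obstacle is bookkeeping around well-definedness rather than any analytic difficulty. The conclusion quantifies over all of $\{+,-,\times,\div\}$, but relative precision is only defined when $a \op b \ne 0$; the stated hypothesis $a + b \ne 0$ handles addition, while for the other operations I would need the corresponding nonvanishing assumption ($a - b \ne 0$, $a b \ne 0$, and $a/b \ne 0$ with $b \ne 0$ so that $a \div b$ is meaningful). Thus the genuine per-operation work is verifying $a \op b \ne 0$ before each invocation of \Cref{thm:rp_model}; once that guard is discharged, the positivity of $e^\delta$ supplies the sign condition and the logarithm identity supplies the bound uniformly, so no operation-specific error analysis is needed beyond what \Cref{thm:rp_model} already packages.
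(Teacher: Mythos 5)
Your proposal is correct and follows essentially the same route as the paper, which derives this theorem directly from \Cref{def:rp} and \Cref{thm:rp_model} applied to $x = a \op b$, with the positivity of $e^\delta$ giving the sign condition and $|\ln(e^\delta)| = |\delta|$ giving the bound. Your observation that the stated hypothesis $a + b \ne 0$ only guards the addition case, and that the other operations need their own nonvanishing assumptions, is a fair point about the statement itself rather than a gap in your argument.
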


In \Cref{sec:example}, we use this result to establish overall bounds on a floating-point inner product. Deriving these bounds relies on several desirable properties of relative precision, which we have proved in our mechanization and list below. For each, suppose $a \sim \tilde{a} \;\apb\; \rp(\alpha)$.

\begin{enumerate}[label=RP\Roman*.]
\item Symmetry \coqfile{relative_prec.v\#L92}: $\tilde{a} \sim a \;\apb\; \rp(\alpha)$
    \label{prop:i}
\item Inclusion \coqfile{relative_prec.v\#L127}:
    for all $\delta \ge \alpha$, $a \sim \tilde{a} \;\apb\; \rp(\delta)$.
    \label{prop:ii} 
\item Constant scaling \coqfile{relative_prec.v\#L136}:
    If $k \ne 0$, then $ka \sim k\tilde{a} \;\apb\; \rp(\alpha)$.
    \label{prop:iii} 
\item Exponentiation, modified to take absolute values --- originally ill-defined \coqfile{relative_prec.v\#L149}: \\
    If $k \in \mathbb{R}$, then $|a|^k \sim |\tilde{a}|^k \;\apb\; \rp(|k|\alpha)$.
    \label{prop:iv}
\item Composition \coqfile{relative_prec.v\#L168}:
    If $b \sim \tilde{b} \;\apb\;\rp(\beta)$, then $a b \sim \tilde{a} \tilde{b} \;\apb\; \rp(\alpha + \beta)$.
    \label{prop:v}
\item Triangle inequality \coqfile{relative_prec.v\#L201}:
    If $\tilde{a} \sim \tilde{\tilde{a}} \;\apb\; \rp(\delta)$, then $a \sim \tilde{\tilde{a}} \;\apb\; \rp(\alpha + \delta)$.
    \label{prop:vi}
\end{enumerate}

Through formalization, we also uncovered issues in Olver's property statements. In particular, observe that property \ref{prop:iv} is ill-defined over the reals for negative $a$ and small $k$: $a^k \sim \tilde{a}^k \;\apb\; \rp(|k|\alpha)$. To address this, we modify the property to take absolute values prior to exponentiation. 

In contrast, for relative error (\Cref{eq:rel_error}), properties \ref{prop:i} and \ref{prop:vi} do not hold. 
For \ref{prop:i}, consider $a = 1$ and $\tilde{a} = 1.1$. 
Then, $\err_{\rel}(a, \tilde{a}) = \frac{1}{10}$.
But, $\err_{\rel}(\tilde{a}, a) = \frac{1}{11}$.
For \ref{prop:vi}, consider $a = 1$, $\tilde{a} = 1.5$, and $\tilde{\tilde{a}} = 2$. Then, $\err_{\rel}(a, \tilde{a}) = \frac{1}{2}$ and $\err_{\rel}(\tilde{a}, \tilde{\tilde{a}}) = \frac{1}{3}$ but $\text{err}_{\rel}(a, \tilde{\tilde{a}}) = 1 > \frac{1}{2} + \frac{1}{3}$.

As a result, relative precision satisfies the axioms of a metric on $\mathbb{R}^{>0}$, whereas relative error does not.
From the above properties, the following theorems describe how relative precision propagates due to addition.

\begin{theorem} \label{thm:rp_add}
Let $a \sim \tilde {a} \;\apb\; \rp(\alpha)$ and let $b \sim \tilde {b} \;\apb\; \rp(\beta)$ and assume that $a$ and $b$ have the same sign and overflow and underflow do not occur. Then
\begin{equation}
    a + b \sim \tilde a + \tilde b \;\apb\; \rp \left( \ln
        \left( \frac{\tilde{a} e ^\alpha + \tilde{b}e^\beta}
                    {\tilde{a} + \tilde{b}} \right) \right).
    \coqtagf{relative_prec.v\#L404}
\end{equation}
\end{theorem}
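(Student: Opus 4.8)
The plan is to reduce to the all-positive case, unfold the two relative-precision hypotheses into two-sided exponential bounds, dispatch the side conditions of \Cref{def:rp}, and then split the central inequality $\lvert\ln((\tilde a+\tilde b)/(a+b))\rvert\le\gamma$ into two halves, where $\gamma=\ln\bigl((\tilde a e^\alpha+\tilde b e^\beta)/(\tilde a+\tilde b)\bigr)$.

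First, since $a$ and $b$ share a sign, I would apply constant scaling (\ref{prop:iii}) with $k=-1$ to assume without loss of generality that $a,b>0$; because $\tilde a$ and $\tilde b$ inherit the signs of $a$ and $b$, all four quantities are then positive, so $a+b$ and $\tilde a+\tilde b$ are positive, which settles the ``nonzero and same sign'' requirement of \Cref{def:rp}. Next I would unfold $a\sim\tilde a\apb\rp(\alpha)$ and $b\sim\tilde b\apb\rp(\beta)$ by exponentiating $\lvert\ln(\tilde a/a)\rvert\le\alpha$ and $\lvert\ln(\tilde b/b)\rvert\le\beta$ into the two-sided bounds $\tilde a e^{-\alpha}\le a\le\tilde a e^\alpha$ and $\tilde b e^{-\beta}\le b\le\tilde b e^\beta$. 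Finally, since $\alpha,\beta\ge 0$ give $e^\alpha,e^\beta\ge 1$, the numerator $\tilde a e^\alpha+\tilde b e^\beta$ dominates $\tilde a+\tilde b$, so $\gamma\ge 0$ as required.

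For the central bound I would rewrite the goal as $e^{-\gamma}\le(\tilde a+\tilde b)/(a+b)\le e^\gamma$, noting $e^\gamma=(\tilde a e^\alpha+\tilde b e^\beta)/(\tilde a+\tilde b)$. The lower half is routine monotonicity: adding the upper bounds $a\le\tilde a e^\alpha$ and $b\le\tilde b e^\beta$ gives $a+b\le\tilde a e^\alpha+\tilde b e^\beta$, whence $(\tilde a+\tilde b)/(a+b)\ge(\tilde a+\tilde b)/(\tilde a e^\alpha+\tilde b e^\beta)=e^{-\gamma}$.

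The main obstacle is the upper half, $(\tilde a+\tilde b)/(a+b)\le e^\gamma$. Clearing the (positive) denominators, this is equivalent to $(\tilde a+\tilde b)^2\le(a+b)(\tilde a e^\alpha+\tilde b e^\beta)$. I would substitute the lower bounds $a\ge\tilde a e^{-\alpha}$ and $b\ge\tilde b e^{-\beta}$ to reduce it to $(\tilde a+\tilde b)^2\le(\tilde a e^{-\alpha}+\tilde b e^{-\beta})(\tilde a e^\alpha+\tilde b e^\beta)$. Expanding the right-hand side yields $\tilde a^2+\tilde b^2+\tilde a\tilde b\,(e^{\alpha-\beta}+e^{\beta-\alpha})$, so after cancelling the common $\tilde a^2+\tilde b^2$ the inequality collapses to $2\le e^{\alpha-\beta}+e^{\beta-\alpha}$, i.e. $\cosh(\alpha-\beta)\ge 1$, which is the AM--GM bound $e^{x}+e^{-x}\ge 2$. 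The delicate parts to mechanize are tracking positivity of every denominator when clearing fractions, the sign bookkeeping in the WLOG reduction, and invoking the $\cosh$/AM--GM fact in the precise algebraic form the proof assistant expects.
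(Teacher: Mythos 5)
Your proof is correct: the WLOG reduction to the positive case via \ref{prop:iii}, the two-sided bounds $\tilde a e^{-\alpha}\le a\le\tilde a e^{\alpha}$ and $\tilde b e^{-\beta}\le b\le\tilde b e^{\beta}$, the easy monotonicity half, and the reduction of the hard half to $e^{x}+e^{-x}\ge 2$ (equivalently, Cauchy--Schwarz in the form $(\tilde a e^{-\alpha}+\tilde b e^{-\beta})(\tilde a e^{\alpha}+\tilde b e^{\beta})\ge(\tilde a+\tilde b)^2$) all check out, including the sign side conditions of \Cref{def:rp}. The paper provides no prose proof of \Cref{thm:rp_add} (only the pointer to the Rocq development), but your argument is exactly the natural definition-unfolding route such a mechanization takes, so there is nothing substantive to contrast.
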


A useful corollary to \Cref{thm:rp_add} is the following, which leverages the inclusion property (Property II) for relative precision to derive a somewhat looser but more practical bound on how relative precision propagates under addition.

\begin{corollary} \label{cor:rp_add}
Let $a \sim \tilde {a} \;\apb\; \rp(\alpha)$ and let $b \sim \tilde {b} \;\apb\; \rp(\beta)$ and assume that $a$ and $b$ have the same sign. Then
\begin{equation}
    a + b \sim \tilde a + \tilde b \;\apb\; \rp (\max(\alpha,\beta)).
    \coqtagf{relative_prec.v\#L429}
\end{equation}
\end{corollary}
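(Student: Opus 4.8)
The plan is to derive the corollary directly from \Cref{thm:rp_add} by showing that the exact precision it produces is never larger than $\max(\alpha,\beta)$, and then invoking the inclusion property (Property~\ref{prop:ii}) to relax the bound. Concretely, write
\[
\gamma = \ln\!\left(\frac{\tilde{a}e^\alpha + \tilde{b}e^\beta}{\tilde{a}+\tilde{b}}\right)
\]
for the precision delivered by \Cref{thm:rp_add}. Since that theorem already gives $a+b \sim \tilde{a}+\tilde{b}\;\apb\;\rp(\gamma)$, it suffices to prove $\gamma \le \max(\alpha,\beta)$: Property~\ref{prop:ii} then upgrades the precision from $\gamma$ to any larger admissible value, in particular to $\max(\alpha,\beta)$.

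First I would record that the hypotheses force $\tilde{a}$ and $\tilde{b}$ to share a sign. Since $a\sim\tilde a$ and $b\sim\tilde b$, each pair has a common sign by \Cref{def:rp}, and $a,b$ agree in sign by assumption, so $\tilde a$ and $\tilde b$ are either both positive or both negative. Next I would observe that the ratio inside the logarithm is sign-invariant: replacing $(\tilde a,\tilde b)$ by $(-\tilde a,-\tilde b)$ leaves the fraction unchanged, so the both-negative case reduces to the both-positive case, and in either case the argument of $\ln$ is a positive quantity.

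The key estimate is then immediate. Setting $w_a = |\tilde a|/(|\tilde a|+|\tilde b|)$ and $w_b = |\tilde b|/(|\tilde a|+|\tilde b|)$, the argument of the logarithm equals $w_a e^\alpha + w_b e^\beta$ with $w_a,w_b>0$ and $w_a+w_b=1$, i.e.\ a convex combination of $e^\alpha$ and $e^\beta$. Hence it is bounded above by $\max(e^\alpha,e^\beta)=e^{\max(\alpha,\beta)}$. Applying the monotone logarithm to both sides yields $\gamma\le\max(\alpha,\beta)$. Finally, since $\gamma\ge 0$ (it is a valid relative precision produced by \Cref{thm:rp_add}), the value $\max(\alpha,\beta)\ge\gamma\ge 0$ is admissible as the target precision in Property~\ref{prop:ii}, which gives $a+b\sim\tilde a+\tilde b\;\apb\;\rp(\max(\alpha,\beta))$ and completes the proof.

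I do not expect a serious obstacle here; the only point requiring care is the both-negative case, where one must verify that the logarithm's argument stays positive and that the reduction to a convex combination is legitimate. Both follow at once from the sign-invariance of the ratio, so the essential mathematical content reduces to the elementary fact that a weighted average of $e^\alpha$ and $e^\beta$ cannot exceed their maximum.
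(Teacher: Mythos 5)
Your proposal is correct and follows essentially the same route as the paper: the corollary is obtained by applying \Cref{thm:rp_add} and then relaxing its exact precision to $\max(\alpha,\beta)$ via the inclusion property \ref{prop:ii}, exactly as the paper indicates. Your convex-combination estimate (with the sign-invariance observation handling the both-negative case) is the right justification for the step $\ln\bigl((\tilde{a}e^\alpha+\tilde{b}e^\beta)/(\tilde{a}+\tilde{b})\bigr)\le\max(\alpha,\beta)$, so nothing is missing.
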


We next describe \emph{absolute precision}, another convenient formalism for describing floating-point error.

\subsection{Absolute Precision}

We begin with the definition of absolute precision:
\begin{definition}[Absolute Precision \coqfile{absolute_prec.v\#L8}]
Let $x, \tilde{x}$ be real numbers. If
\begin{equation}
    |x - \tilde{x}| \le \alpha
\end{equation}
for some $\alpha \ge 0$, then $\tilde{x}$ is 
an approximation to $x$ of \emph{absolute precision} $\alpha$.
\end{definition}

Following Olver, for any $a, \tilde{a} \in \mathbb{R}$, we write 
\begin{equation}
a \sim \tilde{a} \;\apb\; \ap(\alpha)
\end{equation}
to denote that $\tilde{a}$ approximates $a$ with absolute precision $\alpha$. Clearly, if 
$a \sim \tilde{a} \;\apb\; \ap(\alpha)$ then the \emph{absolute error} (\Cref{eq:abs_error}) of the approximation $\tilde{a}$ with respect to $a$ is also bounded by $\alpha$. 

Absolute precision enjoys several nice properties which we have proved in our mechanization. For each, suppose $a \sim \tilde{a} \;\apb\; \ap(\alpha)$.

\begin{enumerate}[label=AP\Roman*.]
\item Symmetry \coqfile{absolute_prec.v\#L21}: $\tilde{a} \sim a \;\apb\; \ap(\alpha)$
    \label{prop:ai}
\item Inclusion \coqfile{absolute_prec.v\#L25}:
    for all $\delta \ge \alpha$, $a \sim \tilde{a} \;\apb\; \ap(\delta)$.
    \label{prop:aii} 
\item Addition \coqfile{absolute_prec.v\#L29}:
    If $k \in \mathbb{R}$, then $a + k \sim \tilde{a} + k \;\apb\; \rp(\alpha)$.
    \label{prop:aiii} 
\item Constant Scaling \coqfile{absolute_prec.v\#L36}:
    If $k \in \mathbb{R}$, then $ka \sim k\tilde{a} \;\apb\; \ap(|k|\alpha)$.
    \label{prop:aiv} 
\item Composition \coqfile{absolute_prec.v\#L44}:
    If $b \sim \tilde{b} \;\apb\;\ap(\beta)$, then $a + b \sim \tilde{a} + \tilde{b} \;\apb\; \ap(\alpha + \beta)$.
    \label{prop:av}
\item Triangle inequality \coqfile{absolute_prec.v\#L52}:
    If $\tilde{a} \sim \tilde{\tilde{a}} \;\apb\; \ap(\delta)$, then $a \sim \tilde{\tilde{a}} \;\apb\; \ap(\alpha + \delta)$.
    \label{prop:avi}
\end{enumerate}

As with absolute error, absolute precision forms a metric on $\R$. From the above properties, the following theorems describe how absolute precision propagates due to multiplication and division.

\begin{theorem}\coqfile{absolute_prec.v\#L80}
Let  $a \sim \tilde{a} \;\apb\; \ap(\alpha),\; b \sim \tilde{b} \;\apb\; \ap(\beta)$. Then
\begin{align*}
	ab \sim \tilde{a} \tilde{b} \;\apb\; \ap(|\tilde{a} \beta| + |\tilde{b} \alpha | + \alpha \beta).
\end{align*}
\label{thm:apmult}
\end{theorem}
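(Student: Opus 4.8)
The plan is to unfold the definition of absolute precision in both hypotheses, reducing the goal to a single inequality between real numbers. From $a \sim \tilde{a} \;\apb\; \ap(\alpha)$ I obtain $|a - \tilde{a}| \le \alpha$ with $\alpha \ge 0$, and from $b \sim \tilde{b} \;\apb\; \ap(\beta)$ I obtain $|b - \tilde{b}| \le \beta$ with $\beta \ge 0$. The goal then splits into two parts: showing that the proposed error bound is nonnegative — immediate, since it is a sum of absolute values and a product of nonnegative reals — and showing the absolute-error inequality $|ab - \tilde{a}\tilde{b}| \le |\tilde{a}\beta| + |\tilde{b}\alpha| + \alpha\beta$.

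The crux is the algebraic identity
\[
ab - \tilde{a}\tilde{b} = \tilde{a}(b - \tilde{b}) + \tilde{b}(a - \tilde{a}) + (a - \tilde{a})(b - \tilde{b}),
\]
which rewrites the error in the product as a linear combination of the two individual errors plus their second-order cross term. This is precisely the decomposition that produces the familiar $O(\delta^2)$ behavior in product error propagation; here it surfaces explicitly as the $\alpha\beta$ summand in the bound.

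With this identity in place, the rest is mechanical. I would apply the triangle inequality to split the three summands, then multiplicativity of absolute value ($|xy| = |x|\,|y|$), and finally monotonicity together with the hypotheses $|a - \tilde{a}| \le \alpha$ and $|b - \tilde{b}| \le \beta$. Since $\alpha, \beta \ge 0$ we have $|\beta| = \beta$ and $|\alpha| = \alpha$, so $|\tilde{a}|\,|b - \tilde{b}| \le |\tilde{a}|\beta = |\tilde{a}\beta|$, $|\tilde{b}|\,|a - \tilde{a}| \le |\tilde{b}|\alpha = |\tilde{b}\alpha|$, and $|a - \tilde{a}|\,|b - \tilde{b}| \le \alpha\beta$; summing these recovers exactly the claimed bound.

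I expect no genuine obstacle, as the statement is essentially the product rule for absolute error propagation. The only points demanding care — and where a mechanized development is most likely to stall — are (i) verifying the cross-term identity by ring normalization rather than by hand, and (ii) discharging the absolute-value bookkeeping, in particular the equalities $|\tilde{a}\beta| = |\tilde{a}|\beta$ and $|\tilde{b}\alpha| = |\tilde{b}|\alpha$ that rely on $\alpha, \beta \ge 0$, so that the right-hand side produced by the triangle inequality matches the stated bound exactly rather than merely up to sign.
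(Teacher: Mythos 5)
Your proof is correct and follows what is essentially the paper's approach: the paper gives no prose proof (the proof lives in the Rocq file \texttt{absolute\_prec.v}), but the decomposition $ab - \tilde{a}\tilde{b} = \tilde{a}(b - \tilde{b}) + \tilde{b}(a - \tilde{a}) + (a - \tilde{a})(b - \tilde{b})$ is exactly the one forced by the stated bound, since the bound's first-order terms carry $|\tilde{a}|$ and $|\tilde{b}|$ rather than $|a|$ and $|b|$, and the $\alpha\beta$ term is the resulting cross term. Your handling of nonnegativity and the absolute-value bookkeeping is also what the mechanization must discharge, so there is nothing to add.
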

\begin{theorem}\coqfile{absolute_prec.v\#L96}
Let  $a \sim \tilde{a} \;\apb\; \ap(\alpha),\; b \sim \tilde{b} \;\apb\; \ap(\beta)$. Suppose additionally that $|\tilde{b}| > \beta$. Then
\begin{align*}
    \frac{a}{b} \sim \frac{\tilde{a}}{\tilde{b}} \;\apb\; \ap\left( \frac{|\tilde{a}| \beta + |\tilde{b}| \alpha}{|\tilde{b}|(|\tilde{b}| - \beta)} \right).
\end{align*}
\label{thm:apdiv}
\end{theorem}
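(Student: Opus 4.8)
The plan is to unfold absolute precision into its defining inequalities $|a - \tilde{a}| \le \alpha$ and $|b - \tilde{b}| \le \beta$ (recall $\alpha, \beta \ge 0$), and then bound $|a/b - \tilde{a}/\tilde{b}|$ directly, since showing $a/b \sim \tilde{a}/\tilde{b} \;\apb\; \ap(\gamma)$ amounts to establishing $|a/b - \tilde{a}/\tilde{b}| \le \gamma$ for $\gamma \ge 0$.

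First I would establish that both quotients are well-defined. From $|\tilde{b}| > \beta \ge 0$ we get $\tilde{b} \ne 0$, and the reverse triangle inequality gives $|b| \ge |\tilde{b}| - |b - \tilde{b}| \ge |\tilde{b}| - \beta > 0$, so $b \ne 0$ as well. I expect this to be the main obstacle: not because the algebra is hard, but because it is the only nontrivial use of the extra hypothesis $|\tilde{b}| > \beta$, and in the Rocq mechanization the nonzero side condition on $b$ must be discharged explicitly before either division is even legal. Everything downstream is routine once this bound on $|b|$ is in hand.

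Next I would put the difference over a common denominator and split the numerator by adding and subtracting $\tilde{a}\tilde{b}$:
\[
  \frac{a}{b} - \frac{\tilde{a}}{\tilde{b}} = \frac{a\tilde{b} - \tilde{a}b}{b\tilde{b}}, \qquad a\tilde{b} - \tilde{a}b = \tilde{b}(a - \tilde{a}) + \tilde{a}(\tilde{b} - b).
\]
The triangle inequality then yields $|a\tilde{b} - \tilde{a}b| \le |\tilde{b}|\,\alpha + |\tilde{a}|\,\beta$, while the bound $|b| \ge |\tilde{b}| - \beta$ from the first step gives $|b\tilde{b}| \ge (|\tilde{b}| - \beta)\,|\tilde{b}| > 0$.

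Finally I would combine these estimates into
\[
  \left| \frac{a}{b} - \frac{\tilde{a}}{\tilde{b}} \right| = \frac{|a\tilde{b} - \tilde{a}b|}{|b|\,|\tilde{b}|} \le \frac{|\tilde{a}|\,\beta + |\tilde{b}|\,\alpha}{|\tilde{b}|(|\tilde{b}| - \beta)},
\]
which is precisely the claimed bound. Since the numerator is nonnegative and the denominator is strictly positive, the right-hand side is a legitimate $\ap(\cdot)$ witness, completing the argument. The structure mirrors the proof of \Cref{thm:apmult}, with the only genuinely new ingredient being the reverse-triangle-inequality lower bound on $|b|$ that the division hypothesis supplies.
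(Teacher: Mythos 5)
Your proof is correct: the common-denominator decomposition $a\tilde{b} - \tilde{a}b = \tilde{b}(a - \tilde{a}) + \tilde{a}(\tilde{b} - b)$, combined with the reverse-triangle-inequality bound $|b| \ge |\tilde{b}| - \beta > 0$, yields exactly the stated bound, and you correctly discharge both the nonzero side conditions on $b$, $\tilde{b}$ and the nonnegativity of the resulting $\ap(\cdot)$ witness. The paper provides no prose proof of this theorem (only the link to the Rocq file), but your argument is the standard direct one for Olver's division bound and matches the structure of \Cref{thm:apmult} as you note, so there is nothing to flag.
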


We uncovered another issue in the absolute precision property statements: Olver additionally claims the following equation to be equivalent to \Cref{eq:rp1}:
\begin{equation}\label{eq:rp-ap-conversion-bad}
    \ln a \sim \ln \tilde{a} \;\apb\;\ap(\alpha)
\end{equation}
This is not quite correct for two reasons. Firstly, the equation is ill-defined for negative $a$ and $\tilde{a}$. Secondly, Equation \ref{eq:rp1} uses relative precision which restricts $a$ and $\tilde{a}$ to be non-zero and have the same sign. By contrast, Equation \ref{eq:rp-ap-conversion-bad} uses absolute precision, which is unrestricted with respect to sign. Therefore, we modify the equation to take absolute values before logarithms and only state and prove the forwards direction, namely that Equation \ref{eq:rp1} implies Equation~(\ref{eq:rp-ap-conversion}) below:
\refstepcounter{equation}\label{eq:rp-ap-conversion}
\begin{equation}
    \ln |a| \sim \ln |\tilde{a}| \;\apb\;\ap(\alpha)
    \tag{\theequation ,\coqfilen{relative_prec.v\#L79}}.
    \label{dummy}
\end{equation}

We now proceed to show how this error analysis can be used in practice.

\section{Error Analysis of a Simple Program}\label{sec:example}

With an alternative model for floating-point arithmetic defined in \Cref{ssec:alt_model} and the relative precision of basic arithmetic operations established in \Cref{thm:rp_ops}, we now leverage the metric properties of relative precision to derive a sound upper bound on the total rounding error accumulated over a sequence of floating-point operations. As an example in our mechanization, we proved the following relative precision bound for the inner product of two vectors.

\begin{theorem}\coqfile{relative_prec_ops.v\#L185}\label{thm:example}
Let $x,y \in \mathbb{F}^n$ be vectors of floating-point numbers such that $x_i * y_i > 0$ for every $i \le n$. Let $s = \sum_{i=1}^{n} (x_i * y_i)$ denote the inner product of $x$ and $y$ computed in infinite precision, and let $\tilde{s}$ denote the inner product computed in floating-point arithmetic. Then
\begin{align*}
	s \sim \tilde{s} \;\apb\; \rp\left(\frac{nu}{1-u}\right),
\end{align*}
\label{thm:rpmult}
where $u$ is the unit roundoff.
\end{theorem}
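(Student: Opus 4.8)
The plan is to induct on the number of accumulated terms, tracking the relative precision of each partial sum. Write $p_i = x_i * y_i$ for the exact products and $\tilde{p}_i = \rho(x_i * y_i)$ for their floating-point counterparts, and abbreviate $v = u/(1-u)$. The floating-point computation of $\tilde{s}$ is a left fold: $\tilde{S}_1 = \tilde{p}_1$ and $\tilde{S}_k = \rho(\tilde{S}_{k-1} + \tilde{p}_k)$ with $\tilde{s} = \tilde{S}_n$, while the exact partial sums are $S_k = \sum_{i=1}^k p_i$ with $s = S_n$. The building block is \Cref{thm:rp_ops}, which supplies the per-operation bound $p_i \sim \tilde{p}_i \;\apb\; \rp(v)$ for each multiplication and $a \sim \rho(a + b) \;\apb\; \rp(v)$ for each floating-point addition of floating-point operands $a,b$ whose exact sum is nonzero.

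First I would establish the invariant $S_k \sim \tilde{S}_k \;\apb\; \rp(kv)$ by induction on $k$. The base case $k = 1$ is exactly the multiplication bound. For the inductive step, I would combine the two summand facts $S_{k-1} \sim \tilde{S}_{k-1} \;\apb\; \rp((k-1)v)$ and $p_k \sim \tilde{p}_k \;\apb\; \rp(v)$ using \Cref{cor:rp_add}. Since every $p_i$ is strictly positive, every partial sum $S_{k-1}$ is positive as well, so $S_{k-1}$ and $p_k$ share a sign and the corollary gives $S_{k-1} + p_k \sim \tilde{S}_{k-1} + \tilde{p}_k \;\apb\; \rp(\max((k-1)v, v)) = \rp((k-1)v)$, where the maximum collapses because $k - 1 \ge 1$. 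Next I would apply the addition bound from \Cref{thm:rp_ops} to the rounding step, obtaining $\tilde{S}_{k-1} + \tilde{p}_k \sim \tilde{S}_k \;\apb\; \rp(v)$. Chaining these two facts through the triangle inequality (Property \ref{prop:vi}), with $a = S_k = S_{k-1}+p_k$, $\tilde{a} = \tilde{S}_{k-1} + \tilde{p}_k$, and $\tilde{\tilde{a}} = \tilde{S}_k$, yields $S_k \sim \tilde{S}_k \;\apb\; \rp((k-1)v + v) = \rp(kv)$, closing the induction. Setting $k = n$ gives $s \sim \tilde{s} \;\apb\; \rp(nv)$, which is the claim.

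The key structural insight, and the reason the bound is the clean $nv = nu/(1-u)$ rather than the $(2n-1)v$ one might naively expect from the $2n-1$ rounding operations, is that the maximum in \Cref{cor:rp_add} absorbs each freshly-rounded product into the running partial sum at no extra cost, so only the single initial product and the $n-1$ additions each contribute one unit of $v$. I expect the main obstacle to be not any single estimate but the bookkeeping around the induction: I must carry as part of the invariant both the sign condition that each $S_k$ and its floating-point image $\tilde{S}_k$ are strictly positive (so that \Cref{cor:rp_add} and the relative-precision relations remain applicable) and the condition $k - 1 \ge 1$ that makes the maximum genuinely collapse. A secondary point of care is aligning the induction precisely with the recursive definition of the floating-point inner product in the mechanization and treating the $n = 1$ base case, where no additions occur, separately.
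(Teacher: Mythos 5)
Your proposal is correct and takes essentially the same approach as the paper's proof: the same induction on partial sums with the same intermediate quantity $\tilde{S}_{k-1}+\tilde{p}_k$ (the paper's $\hat{s}_{k+1}$), and the same three ingredients --- \Cref{cor:rp_add} with the maximum collapsing onto the larger term, \Cref{thm:rp_ops} for both the rounded product and the rounded sum, and the triangle inequality \ref{prop:vi}. The only cosmetic differences are the indexing of the inductive step ($k-1 \to k$ versus the paper's $k \to k+1$) and your description of the accumulation as a left fold where the mechanization uses a right fold.
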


\begin{proof}
The proof follows naturally by induction over $n$, using \Cref{cor:rp_add} and the metric properties of relative precision. These properties simplify the analysis by ensuring that errors propagate in a structured and composable manner. In particular, they allow the error of a sum to be expressed in terms of the errors of its components, making the inductive argument both natural and straightforward.

For the base case, we show that
\begin{equation}
s_1 \sim \tilde{s}_1 \;\apb\; \rp\left(\frac{u}{1-u}\right),
\end{equation}
which follows directly from \Cref{thm:rp_ops}. 

For the inductive step, we assume
$s_{k} \sim \tilde{s}_{k} \;\apb\; \rp\left(\frac{ku}{1-u}\right)$
and aim to show 
\begin{equation}
    s_{k+1} \sim \tilde{s}_{k+1} \;\apb\; \rp\left(\frac{(k+1)u}{1-u}\right),
\end{equation}
where $s_k$ is the $k$th partial sum $s_k = x_1y_1 + \dots + x_ky_k$. 
By definition, we have
\begin{equation}
s_{k+1} = s_k + x_{k+1}y_{k+1}
\end{equation}
and 
\begin{equation}
\tilde{s}_{k+1} = \rho(\tilde{s}_k + \rho(x_{k+1}y_{k+1}))
\end{equation}
where $\rho : \R \rightarrow \F$ is a well-defined rounding function. 

Introducing the intermediate value 
\begin{equation}
\hat{s}_{k+1} = \tilde{s}_k +  \rho(x_{k+1}y_{k+1}),
\end{equation}
we apply \Cref{cor:rp_add}, \Cref{thm:rp_ops}, and the induction hypothesis  to obtain
\begin{equation}
s_{k+1} \sim \hat{s}_{k+1} \;\apb\; \rp\left(\frac{ku}{1-u}\right).
\end{equation}

Furthermore, by \Cref{thm:rp_ops}, we also have
\begin{equation}
\hat{s}_{k+1} \sim \tilde{s}_{k+1} \;\apb\; \rp\left(\frac{u}{1-u}\right).
\end{equation}

The result follows by the triangle inequality.
\end{proof}

The restriction in \Cref{thm:example} that each corresponding product $x_iy_i$ of the 
vectors $x$ and $y$ is strictly positive for all $i \le n$ ensures that all elements 
of the sums $s_i$ and $\tilde{s}_i$ are non-zero and of the same sign, a necessary condition 
for using the relative precision metric as it is defined in \Cref{def:rp}. If the elements 
of the vectors do not satisfy this restriction, absolute precision provides a more appropriate 
measure of the error. 

Since relative error bounds for computing the inner product of two vectors are well established, we can use \Cref{thm:rp_re_conversion} to compare the relative precision bound derived in \Cref{thm:example} with known relative error bounds. For vectors whose components satisfy the restriction that each corresponding product $x_iy_i$ of the 
vectors $x$ and $y$ is strictly positive for all $i \le n$, Higham~\cite{higham02} shows that the relative error of the inner product can be bounded as 
\begin{equation}\label{eq:ip_rel_err_1}
    \left| \frac{s - \tilde{s}}{s} \right| \le \frac{nu}{1-nu}.
\end{equation}
From equation \Cref{thm:rp_re_conversion} we can express the relative error bound in terms of 
relative precision. Substituting the result of \Cref{thm:example} into \Cref{thm:rp_re_conversion}, we obtain
\begin{equation}
        \left| \frac{s - \tilde{s}}{s} \right| \le \frac{nu}{1-(n+1)u},
\end{equation}
\noindent which provides a slightly looser bound than \Cref{eq:ip_rel_err_1} in practice.

Thus, the cost of using relative precision instead of relative error in this setting is a slightly looser bound, but the benefit is that the metric properties of relative precision provide a structured way of composing error bounds across multiple operations. 

\section{Proof Engineering}
Our Rocq mechanization is based on real numbers and uses the rounding modes defined in the Flocq library \cite{boldo11} to reason about the relationships between real numbers and their floating-point approximations; these rounding modes correspond to the rounding functions in \Cref{tab:rnd_modes}. We represent the set of real numbers corresponding to a given floating-point format using the Flocq FLX format, which features unbounded exponents. This choice aligns with the assumptions made in the theorems and definitions from previous sections that no underflows or overflows occur. 

Our definitions of absolute and relative precision are formulated over real numbers as defined in the Mathematical Components library (MathComp \cite{mathcomp22}) for real analysis. We use MathComp and SSReflect for their robust tactics in ring reasoning, number theory, and algebraic rewriting. Our mechanization of \Cref{thm:example} encodes the inner product as a right fold over Rocq lists of real numbers. 

\section{Related Work}

The standard error model is in most automated and manual floating-point error analysis tools, such as VCFloat2 \cite{appel24}, FPTaylor \cite{solovyev18}, PRECiSA, Daisy \cite{Darulova:2018:Daisy}, Colibri2 \cite{colibri21}, Gappa \cite{dedinechin:2011:gappa}. VCFloat2 is tool developed in Rocq for verifying that C programs implement an approximation of a real-number specification. FPTaylor, Daisy, Colibri2, and Gappa provide varying levels of automation for computing floating-point error bounds. The formalization of floating-point arithmetic in Rocq, as well as analysis of various numerical algorithms, has been described by Boldo and Melquiond \cite{boldo17}.

Relative precision has been employed in various analyses, for example, as a loss function in modeling applications---often referred to as the accuracy ratio or quotient error, along with derived functions such as the logarithmic or squared quotient error \cite{tofallis15}---and in accuracy statistics more generally \cite{kitchenham01}, where it helps mitigate certain types of model selection bias. Botchkarev provides an overview of these metrics, particularly in the context of machine learning models \cite{botchkarev19}.

Static analysis tools have also leveraged the metric properties of relative precision. For example, NumFuzz~\cite{kellison24} incorporates floating-point error analysis into its type system using relative precision, enabling precise and scalable reasoning about the numerical accuracy of programs. Bean~\cite{kellison25} similarly employs relative precision for backward error analysis, providing a type-based framework for analyzing numerical stability. Bean extends the definition proposed by Olver to all real numbers as follows:
\begin{equation}\label{eq:bean}
    \textit{RP}(x,y) = \begin{cases}
        |\ln(x/y)| & \text{if } \text{sign}(x) = \text{sign}(y) \text{ and } x, y \ne 0 \\
        0 & \text{if } x = y = 0\\
        \infty & \text{otherwise}
    \end{cases}.
\end{equation}

In numerical analysis, relative precision has also been used to reason about the stability of algorithms under composition. For example, Beltr\'{a}n et al.~\cite{beltran2023condition} adopt the coordinate-free relative error metric introduced by Pryce~\cite{pryce1984relative} in 1984, building on Olver’s original notion of relative precision.



\section{Future Work \& Conclusion}

In this paper we presented a formalization in Rocq of an alternative to the standard model for floating-point error, first written by Olver. By formalizing this theory in Rocq, we discovered undefined scenarios in the definitions related to sign (Property~\ref{prop:iv} and Equation~\ref{eq:rp-ap-conversion-bad}) and an equivalence between Equation~\ref{eq:rp1} and Equation~\ref{eq:rp-ap-conversion-bad} that was false due to a separate issue with signs. By making these assumptions explicit in Rocq, our analysis improves upon the soundness of relative precision.

Mechanizing this error arithmetic provides opportunities for use in other tools. We plan to investigate whether this simplifies static analysis of floating-point programs. The most straightforward future work is building relative precision for more floating-point operations, such as the IEEE 754 supported by Flocq; in this case, while the error analysis would be more complex (often, handling exceptional behaviors is a large portion of the proof engineering effort), it could allow full proofs of correctness for more robust numerical programs. 

Another potential application of relative precision is backwards error analysis, which can be more convenient for detecting sensitivity of input on a program's error; it may be that relative precision simplifies this analysis. 
Backwards error analysis using \Cref{eq:bean} can be mechanized using MathComp's extensive extended real number support ($\mathbb{R} \cup \{\infty\}$).

Similarly, tools such as Gappa or other (relational) abstract interpretation-based tools use heuristics to better estimate ranges of inputs to provide ranges of worst-case error analysis but can suffer from scalability issues as the number of variables and floating-point operations increases; we wish to investigate whether the relative precision-based heuristics (or a combination of standard model and Olver's model) could simplify analysis and hence improve computational efficiency.

In conclusion, this formalization motivates further research in error analysis and provides another tool in the analyst's toolbox to more easily write safe, correct numerical programs.

\section*{Acknowledgments}

This material is based upon work supported by the National Science Foundation Graduate Research Fellowship Program under Grant No. 2139899. Any opinions, findings, and conclusions or recommendations expressed in this material are those of the author(s) and do not necessarily reflect the views of the National Science Foundation.

Sandia National Laboratories is a multimission laboratory managed and operated by National Technology \& Engineering Solutions of Sandia, LLC, a wholly owned subsidiary of Honeywell International Inc., for the U.S. Department of Energy's National Nuclear Security Administration under contract DE-NA0003525.

\bibliographystyle{eptcs}
\bibliography{generic}

\begin{thebibliography}{10}
\providecommand{\bibitemdeclare}[2]{}
\providecommand{\surnamestart}{}
\providecommand{\surnameend}{}
\providecommand{\urlprefix}{Available at }
\providecommand{\url}[1]{\texttt{#1}}
\providecommand{\href}[2]{\texttt{#2}}
\providecommand{\urlalt}[2]{\href{#1}{#2}}
\providecommand{\doi}[1]{doi:\urlalt{https://doi.org/#1}{#1}}
\providecommand{\eprint}[1]{arXiv:\urlalt{https://arxiv.org/abs/#1}{#1}}
\providecommand{\bibinfo}[2]{#2}

\bibitemdeclare{inproceedings}{appel24}
\bibitem{appel24}
\bibinfo{author}{Andrew \surnamestart Appel\surnameend} \&
  \bibinfo{author}{Ariel~E. \surnamestart Kellison\surnameend}
  (\bibinfo{year}{2024}): \emph{\bibinfo{title}{{VCFloat2}: Floating-Point
  Error Analysis in Coq}}.
\newblock In: {\slshape \bibinfo{booktitle}{Proceedings of the 13th ACM SIGPLAN
  International Conference on Certified Programs and Proofs}},
  \bibinfo{series}{CPP 2024}, \bibinfo{publisher}{Association for Computing
  Machinery}, \bibinfo{address}{New York, NY, USA}, p.
  \bibinfo{pages}{14–29}, \doi{10.1145/3636501.3636953}.

\bibitemdeclare{article}{beltran2023condition}
\bibitem{beltran2023condition}
\bibinfo{author}{Carlos \surnamestart Beltr{á}n\surnameend},
  \bibinfo{author}{Vanni \surnamestart Noferini\surnameend} \&
  \bibinfo{author}{Nick \surnamestart Vannieuwenhoven\surnameend}
  (\bibinfo{year}{2023}): \emph{\bibinfo{title}{When can forward stable
  algorithms be composed stably?}}
\newblock {\slshape \bibinfo{journal}{IMA Journal of Numerical Analysis}}
  \bibinfo{volume}{44}(\bibinfo{number}{2}), pp. \bibinfo{pages}{886--919},
  \doi{10.1093/imanum/drad026}.
\newblock
  \urlprefix\url{https://academic.oup.com/imajna/article-abstract/44/2/886/7181045}.

\bibitemdeclare{inproceedings}{boldo11}
\bibitem{boldo11}
\bibinfo{author}{Sylvie \surnamestart Boldo\surnameend} \&
  \bibinfo{author}{Guillaume \surnamestart Melquiond\surnameend}
  (\bibinfo{year}{2011}): \emph{\bibinfo{title}{Flocq: A Unified Library for
  Proving Floating-Point Algorithms in Coq}}.
\newblock In: {\slshape \bibinfo{booktitle}{IEEE 20th Symposium on Computer
  Arithmetic}}, pp. \bibinfo{pages}{243--252}, \doi{10.1109/ARITH.2011.40}.
\newblock \urlprefix\url{https://inria.hal.science/inria-00534854v2/document}.

\bibitemdeclare{book}{boldo17}
\bibitem{boldo17}
\bibinfo{author}{Sylvie \surnamestart Boldo\surnameend} \&
  \bibinfo{author}{Guillaume \surnamestart Melquiond\surnameend}
  (\bibinfo{year}{2017}): \emph{\bibinfo{title}{Computer Arithmetic and Formal
  Proofs: Verifying Floating-Point Algorithms with the Coq System}},
  \bibinfo{edition}{1st} edition.
\newblock \bibinfo{publisher}{ISTE Press - Elsevier}, \bibinfo{address}{United
  Kingdom}, \doi{10.1016/C2015-0-01301-6}.

\bibitemdeclare{article}{botchkarev19}
\bibitem{botchkarev19}
\bibinfo{author}{Alexei \surnamestart Botchkarev\surnameend}
  (\bibinfo{year}{2019}): \emph{\bibinfo{title}{A New Typology Design of
  Performance Metrics to Measure Errors in Machine Learning Regression
  Algorithms}}.
\newblock {\slshape \bibinfo{journal}{Interdisciplinary Journal of Information,
  Knowledge, and Management}} \bibinfo{volume}{14}, pp.
  \bibinfo{pages}{45--76}, \doi{10.28945/4184}.
\newblock \urlprefix\url{https://arxiv.org/abs/1809.03006}.

\bibitemdeclare{misc}{colibri21}
\bibitem{colibri21}
\bibinfo{author}{\surnamestart {CEA-List}\surnameend} (\bibinfo{year}{2021}):
  \emph{\bibinfo{title}{Colibri: Solvers for proving programs}}.
\newblock \bibinfo{note}{Available at
  \url{https://colibri.frama-c.com/index.html}}.

\bibitemdeclare{inproceedings}{Darulova:2018:Daisy}
\bibitem{Darulova:2018:Daisy}
\bibinfo{author}{Eva \surnamestart Darulova\surnameend},
  \bibinfo{author}{Anastasiia \surnamestart Izycheva\surnameend},
  \bibinfo{author}{Fariha \surnamestart Nasir\surnameend},
  \bibinfo{author}{Fabian \surnamestart Ritter\surnameend},
  \bibinfo{author}{Heiko \surnamestart Becker\surnameend} \&
  \bibinfo{author}{Robert \surnamestart Bastian\surnameend}
  (\bibinfo{year}{2018}): \emph{\bibinfo{title}{Daisy - Framework for Analysis
  and Optimization of Numerical Programs (Tool Paper)}}.
\newblock In \bibinfo{editor}{Dirk \surnamestart Beyer\surnameend} \&
  \bibinfo{editor}{Marieke \surnamestart Huisman\surnameend}, editors:
  {\slshape \bibinfo{booktitle}{Tools and Algorithms for the Construction and
  Analysis of Systems}}, \bibinfo{publisher}{Springer International
  Publishing}, \bibinfo{address}{Thessaloniki, Greece}, pp.
  \bibinfo{pages}{270--287}, \doi{10.1007/978-3-319-89960-2_15}.

\bibitemdeclare{article}{dedinechin:2011:gappa}
\bibitem{dedinechin:2011:gappa}
\bibinfo{author}{Florent \surnamestart de~Dinechin\surnameend},
  \bibinfo{author}{Christoph \surnamestart Lauter\surnameend} \&
  \bibinfo{author}{Guillaume \surnamestart Melquiond\surnameend}
  (\bibinfo{year}{2011}): \emph{\bibinfo{title}{Certifying the Floating-Point
  Implementation of an Elementary Function Using Gappa}}.
\newblock {\slshape \bibinfo{journal}{IEEE Transactions on Computers}}
  \bibinfo{volume}{60}(\bibinfo{number}{2}), pp. \bibinfo{pages}{242--253},
  \doi{10.1109/TC.2010.128}.

\bibitemdeclare{book}{higham02}
\bibitem{higham02}
\bibinfo{author}{Nicholas~J. \surnamestart Higham\surnameend}
  (\bibinfo{year}{2002}): \emph{\bibinfo{title}{Accuracy and Stability of
  Numerical Algorithms, second edition}}.
\newblock \bibinfo{publisher}{SIAM}, \doi{10.1137/1.9780898718027}.

\bibitemdeclare{article}{kellison24}
\bibitem{kellison24}
\bibinfo{author}{Ariel~E. \surnamestart Kellison\surnameend} \&
  \bibinfo{author}{Justin \surnamestart Hsu\surnameend} (\bibinfo{year}{2024}):
  \emph{\bibinfo{title}{Numerical Fuzz: A Type System for Rounding Error
  Analysis}}.
\newblock {\slshape \bibinfo{journal}{Proceedings of the ACM on Programming
  Languages}} \bibinfo{volume}{8}(\bibinfo{number}{PLDI}),
  \doi{10.1145/3656456}.
\newblock \urlprefix\url{https://dl.acm.org/doi/10.1145/3656456}.

\bibitemdeclare{misc}{kellison25}
\bibitem{kellison25}
\bibinfo{author}{Ariel~E. \surnamestart Kellison\surnameend},
  \bibinfo{author}{Laura \surnamestart Zielinski\surnameend},
  \bibinfo{author}{David \surnamestart Bindel\surnameend} \&
  \bibinfo{author}{Justin \surnamestart Hsu\surnameend} (\bibinfo{year}{2025}):
  \emph{\bibinfo{title}{Bean: A Language for Backward Error Analysis}}.
\newblock \bibinfo{howpublished}{arXiv}, \doi{10.48550/arXiv.2501.14550}.
\newblock \bibinfo{note}{Available at
  \url{https://arxiv.org/abs/2501.14550v1}}.

\bibitemdeclare{article}{kitchenham01}
\bibitem{kitchenham01}
\bibinfo{author}{B.~A. \surnamestart Kitchenham\surnameend},
  \bibinfo{author}{L.~M. \surnamestart Pickard\surnameend},
  \bibinfo{author}{S.~G. \surnamestart MacDonell\surnameend} \&
  \bibinfo{author}{M.~J. \surnamestart Shepperd\surnameend}
  (\bibinfo{year}{2001}): \emph{\bibinfo{title}{What accuracy statistics really
  measure}}.
\newblock {\slshape \bibinfo{journal}{IEE Proceedings - Software}}
  \bibinfo{volume}{148}.
\newblock
  \urlprefix\url{https://www.researchgate.net/publication/3422071_What_accuracy_statistics_really_measure}.

\bibitemdeclare{book}{mathcomp22}
\bibitem{mathcomp22}
\bibinfo{author}{Assia \surnamestart Mahboubi\surnameend} \&
  \bibinfo{author}{Enrico \surnamestart Tassil\surnameend}
  (\bibinfo{year}{2022}): \emph{\bibinfo{title}{Mathematical Components}}.
\newblock \bibinfo{publisher}{Inria-Microsoft Research Joint Center},
  \doi{10.5281/zenodo.3999478}.

\bibitemdeclare{book}{Muller18}
\bibitem{Muller18}
\bibinfo{author}{Jean-Michel \surnamestart Muller\surnameend},
  \bibinfo{author}{Nicolas \surnamestart Brisebarre\surnameend},
  \bibinfo{author}{Florent \surnamestart De~Dinechin\surnameend},
  \bibinfo{author}{Claude-Pierre \surnamestart Jeannerod\surnameend},
  \bibinfo{author}{Vincent \surnamestart Lef{\`e}vre\surnameend},
  \bibinfo{author}{Guillaume \surnamestart Melquiond\surnameend},
  \bibinfo{author}{Nathalie \surnamestart Revol\surnameend} \&
  \bibinfo{author}{Serge \surnamestart Torres\surnameend}
  (\bibinfo{year}{2018}): \emph{\bibinfo{title}{Handbook of Floating-Point
  Arithmetic}}, \bibinfo{edition}{second} edition.
\newblock \bibinfo{publisher}{Springer International Publishing AG},
  \doi{10.1007/978-3-319-76526-6}.

\bibitemdeclare{article}{olver78}
\bibitem{olver78}
\bibinfo{author}{F.~W.~J. \surnamestart Olver\surnameend}
  (\bibinfo{year}{1978}): \emph{\bibinfo{title}{A New Approach to Error
  Arithmetic}}.
\newblock {\slshape \bibinfo{journal}{Journal on Numerical Analysis}}
  \bibinfo{volume}{15}(\bibinfo{number}{2}), pp. \bibinfo{pages}{368--393},
  \doi{10.1137/0715024}.

\bibitemdeclare{techreport}{pollard:2023:twofv}
\bibitem{pollard:2023:twofv}
\bibinfo{author}{Samuel~D. \surnamestart Pollard\surnameend},
  \bibinfo{author}{Jon~M. \surnamestart Aytac\surnameend},
  \bibinfo{author}{Ariel \surnamestart Kellison\surnameend},
  \bibinfo{author}{Ignacio \surnamestart Laguna\surnameend},
  \bibinfo{author}{Srinivas \surnamestart Nedunuri\surnameend},
  \bibinfo{author}{Sabrina \surnamestart Reis\surnameend},
  \bibinfo{author}{Matthew~J. \surnamestart Sottile\surnameend} \&
  \bibinfo{author}{Heidi~K. \surnamestart Thornquist\surnameend}
  (\bibinfo{year}{2024}): \emph{\bibinfo{title}{The First Tri-Lab Workshop on
  Formal Verification: Capabilities, Challenges, Research Opportunities, and
  Exemplars}}.
\newblock \bibinfo{type}{Technical Report}, \bibinfo{institution}{Sandia
  National Laboratories}, \doi{10.2172/2430026}.
\newblock \urlprefix\url{https://www.osti.gov/biblio/2430026}.

\bibitemdeclare{article}{pryce1984relative}
\bibitem{pryce1984relative}
\bibinfo{author}{J.~D. \surnamestart Pryce\surnameend} (\bibinfo{year}{1984}):
  \emph{\bibinfo{title}{A New Measure of Relative Error for Vectors}}.
\newblock {\slshape \bibinfo{journal}{SIAM Journal on Numerical Analysis}}
  \bibinfo{volume}{21}(\bibinfo{number}{1}), pp. \bibinfo{pages}{202--215},
  \doi{10.1137/0721015}.

\bibitemdeclare{article}{solovyev18}
\bibitem{solovyev18}
\bibinfo{author}{Alexey \surnamestart Solovyev\surnameend},
  \bibinfo{author}{Marek~S. \surnamestart Baranowski\surnameend},
  \bibinfo{author}{Ian \surnamestart Briggs\surnameend},
  \bibinfo{author}{Charles \surnamestart Jacobsen\surnameend},
  \bibinfo{author}{Zvonimir \surnamestart Rakamari\'{c}\surnameend} \&
  \bibinfo{author}{Ganesh \surnamestart Gopalakrishnan\surnameend}
  (\bibinfo{year}{2018}): \emph{\bibinfo{title}{Rigorous Estimation of
  Floating-Point Round-Off Errors with Symbolic Taylor Expansions}}.
\newblock {\slshape \bibinfo{journal}{ACM Transactions on Programming Languages
  and Systems}} \bibinfo{volume}{41}(\bibinfo{number}{1}),
  \doi{10.1145/3230733}.

\bibitemdeclare{misc}{coq820}
\bibitem{coq820}
\bibinfo{author}{\surnamestart {The Coq Development Team}\surnameend}
  (\bibinfo{year}{2024}): \emph{\bibinfo{title}{The {Coq} Reference Manual --
  Release 8.20.0}}, \doi{10.5281/zenodo.14542673}.
\newblock \urlprefix\url{https://coq.inria.fr/doc/V8.20.0/refman/}.

\bibitemdeclare{article}{tofallis15}
\bibitem{tofallis15}
\bibinfo{author}{Chris \surnamestart Tofallis\surnameend}
  (\bibinfo{year}{2015}): \emph{\bibinfo{title}{A better measure of relative
  prediction accuracy for model selection and model estimation}}.
\newblock {\slshape \bibinfo{journal}{Journal of the Operational Research
  Society}} \bibinfo{volume}{66}, pp. \bibinfo{pages}{1352--1362},
  \doi{10.1057/jors.2014.103}.

\bibitemdeclare{article}{IEEE754:19}
\bibitem{IEEE754:19}
\bibinfo{author}{\surnamestart {Working Group for Floating-Point
  Arithmetic}\surnameend} (\bibinfo{year}{2019}): \emph{\bibinfo{title}{IEEE
  Standard for Floating-Point Arithmetic}}.
\newblock {\slshape \bibinfo{journal}{IEEE Std 754-2019 (Revision of IEEE
  754-2008)}}, pp. \bibinfo{pages}{1--84}, \doi{10.1109/IEEESTD.2019.8766229}.

\end{thebibliography}
\end{document}